\documentclass[12pt]{article}
\usepackage[margin=1in]{geometry}
\usepackage[T1]{fontenc}
\usepackage[utf8]{inputenc}
\usepackage{lmodern,amsmath,amssymb,amsthm,microtype}
\usepackage[authoryear,round]{natbib}
\usepackage{setspace}
\usepackage[hidelinks]{hyperref}
\newtheorem{theorem}{Theorem}[section]
\newtheorem{lemma}[theorem]{Lemma}

\newtheorem{corollary}[theorem]{Corollary}
\theoremstyle{definition}
\newtheorem{definition}[theorem]{Definition}
\theoremstyle{remark}
\newtheorem{remark}[theorem]{Remark}
\newcommand{\Pp}{\mathrm{P}}

\newcommand{\R}{\mathbb R}
\title{Extending the $V$-sequence reduction to arbitrary independent distributions}
\author{Jos\'e A. Islas\\
\small Facultad de Ciencias Naturales y Exactas\\
\small Universidad Aut\'onoma de Sinaloa, Culiac\'an, Sinaloa, Mexico\\
\small Corresponding author: \href{mailto:islas.jose@uas.edu.mx}{islas.jose@uas.edu.mx}}
\date{}
\begin{document}
\doublespacing
\maketitle
\begin{abstract}
Let $X_1,\ldots,X_n$ be independent random variables with arbitrary known distributions, where $n\ge2$. The optimal probability $V_n(X_1,\ldots,X_n)$ of choosing an overall maximum satisfies
\[
V_n(X_1,\ldots,X_n)\ge\left(1-\frac1n\right)^{n-1},
\]
and this bound is attained. We obtain this known sharp bound by extending the reduction lemma of Allaart and Islas (2016) from continuous to arbitrary distributions. We construct a $V$-sequence whose optimal win probability does not exceed that of the original sequence, and then apply the bound for $V$-sequences. The construction first replaces atoms by uniform observations on strictly ordered intervals and applies the original continuous reduction lemma. The reduced distributions are determined by a common critical level and a common probability for splitting a possible atom at that level.
\end{abstract}
\noindent\textbf{Keywords:} choosing the maximum; optimal stopping; independent observations; atoms; $V$-sequence.

\noindent\textbf{2020 Mathematics Subject Classification:} Primary 60G40; Secondary 62L15.

\section{Introduction and main result}
Let $X_1,\ldots,X_n$ be independent real-valued random variables with known distribution functions $F_1,\ldots,F_n$. Write
\[
M_n:=\max\{X_1,\ldots,X_n\},\qquad
V_n(X_1,\ldots,X_n):=\sup_\tau\Pp(X_\tau=M_n),
\]
where the supremum is over stopping times taking values in $\{1,\ldots,n\}$ and adapted to the natural filtration of $X_1,\ldots,X_n$. In particular, selecting any observation tied for the maximum is a win.

This note continues the reduction method of \citet{AI}. For independent random variables with continuous distributions, that paper constructs a $V$-sequence by replacing the observations one at a time without increasing the optimal win probability. The first observation of the reduced sequence is constant, and each later observation takes at most two values. The purpose here is to extend this construction to arbitrary independent distributions.

Although ties should make it easier to choose an overall maximum, the original proof cannot be applied directly when atoms are present. Continuity is used at two steps. First, it gives a critical value at which the probabilities of winning by stopping and by continuing are equal. With atoms, the two functions may cross through a jump without being equal at any point. Second, an intermediate value argument is used to choose constants needed for the successive replacements. A discontinuous function may skip the required value. Thus the expected extension of the reduction still requires a proof.

We overcome these difficulties by first replacing each atom by a uniform observation on an interval, keeping the intervals for distinct original values strictly ordered. We show that this replacement cannot increase the optimal win probability, and then apply the original continuous reduction lemma. On returning to the original observations, its critical value becomes a common level $c$, with a possible random split of observations equal to $c$. The same splitting probability is used for every observation.

\citet{HK} established sharp bounds for stopping rewards based on ranks for arbitrary independent distributions. Their Theorem 1.2, specialized to selection of the best observation, gives the sharp bound
\[
V_n(X_1,\ldots,X_n)\ge(1-1/n)^{n-1}.
\]
Their convention assigns rank one to every observation tied for the maximum, agreeing with the objective considered here. Their proof also uses auxiliary uniforms to split a threshold atom and transfers the guarantee from randomized to ordinary stopping times. Thus neither this bound nor the use of auxiliary randomization to handle atoms is claimed as new.

Related treatments of ties include \citet{EHLM}, who use auxiliary randomization in threshold algorithms for nonidentically distributed observations, and \citet{GKS}, whose Section 2.2 compares arbitrary i.i.d. observations with continuous ones through randomized atom spreading. Our purpose is to give an explicit general-distribution extension of the Allaart--Islas construction. The comparison uses the same idea of breaking ties at random. The reduction to a $V$-sequence is obtained by applying the lemma in \citet{AI}.

\begin{definition}
An independent sequence $X_1',\ldots,X_n'$ is a \emph{$V$-sequence} if $X_1'\equiv a_1$ and, for $i\ge2$, $X_i'$ takes values in $\{a_i,b_i\}$, where
\[
a_1<a_2<\cdots<a_n,\qquad a_1>b_2>\cdots>b_n.
\]
Degenerate two-valued distributions are allowed.
\end{definition}

\begin{theorem}[Reduction lemma]\label{thm:main}
Let $n\ge2$ and let $X_1,\ldots,X_n$ be independent real-valued random variables. There exist $c\in\R$, $\rho\in[0,1]$ and a $V$-sequence $X_1',\ldots,X_n'$ with probabilities of the upper values
\begin{equation}\label{eq:pi}
p_i:=\Pp(X_i^\prime=a_i)=\Pp(X_i>c)+(1-\rho)\Pp(X_i=c),\qquad 2\le i\le n,
\end{equation}
such that, writing $q_i=1-p_i$ and $r_i=p_i/q_i$,
\begin{equation}\label{eq:constructivevalue}
0<\prod_{i=2}^n q_i
=V_n(X_1',\ldots,X_n')
\le V_n(X_1,\ldots,X_n).
\end{equation}
The sequence can be constructed using independent uniform random variables $\xi_i$ on $(0,1)$, independent of $X_1,\ldots,X_n$, by setting $X_1'\equiv a_1$ and
\begin{equation}\label{eq:coupled}
X_i'=\begin{cases}
a_i,& X_i>c\text{ or }(X_i=c\text{ and }\xi_i>\rho),\\
b_i,&\text{otherwise},
\end{cases}\qquad i\ge2,
\end{equation}
where $a_1<a_2<\cdots<a_n$ and $a_1>b_2>\cdots>b_n$.
Moreover,
\begin{equation}\label{eq:odds}
\sum_{i=2}^n r_i\le1.
\end{equation}
Each optimal win probability is taken over stopping times adapted to the natural filtration of the sequence in question.
\end{theorem}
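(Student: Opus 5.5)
The plan is to reduce to the continuous case treated in \citet{AI} by spreading atoms, apply their reduction lemma there, and then read the resulting threshold back on the original scale.

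\textbf{Step 1: spreading atoms.} Let $D$ be the countable set of atoms of all the $F_i$. Fix a strictly increasing map $\phi:\R\to\R$ and, for each $d\in D$, a nondegenerate interval $I_d=[\phi(d),\phi(d)+\ell_d]$. We choose them so that all intervals $\phi(x)$, $I_d$ are strictly ordered according to $x,d\in\R$; for instance, use $\phi(x)=x+\sum_{d\in D,\,d<x}2^{-k(d)}$ with an enumeration $k$ of $D$, and $\ell_d=2^{-k(d)}$. Set $Y_i=\phi(X_i)$ if $X_i\notin D$, and $Y_i=\phi(X_i)+\ell_{X_i}\xi_i$ if $X_i\in D$. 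Each $Y_i$ then has a continuous distribution, and $Y_i<Y_j$ whenever $X_i<X_j$.

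\textbf{Step 2: comparing win probabilities.} We show $V_n(Y)\le V_n(X)$. A stopping rule for $Y$ is a randomized stopping rule for $X$, since $Y_1,\dots,Y_k$ is a function of $X_1,\dots,X_k$ and $\xi_1,\dots,\xi_k$. If $Y_\tau=\max_j Y_j$, then $X_\tau=M_n$, because the map from $X$ to $Y$ is monotone. Hence $\Pp(Y_\tau=\max Y)\le\Pp(X_\tau=M_n)$ for this randomized rule. Randomization does not help: conditioning on the independent $\xi$'s gives a mixture of stopping times for $X$. Alternatively, backward induction shows that the optimal value with the enlarged filtration equals $V_n(X)$, since the $\xi$'s are independent of the future. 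So $V_n(Y)\le V_n(X)$.

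\textbf{Step 3: applying \citet{AI} and translating back.} Applying the continuous reduction lemma of \citet{AI} to $Y_1,\dots,Y_n$ gives a critical level $c'$ and a $V$-sequence with $X_i'=a_i$ iff $Y_i>c'$ (up to null sets), $V_n(X')=\prod_{i\ge2}q_i\le V_n(Y)$, and $\sum r_i\le1$. In that lemma the product formula and the odds bound \eqref{eq:odds} come from the construction. The value $\prod q_i$ is positive: if some $q_i=0$, then $r_i=\infty$, contradicting $\sum r_i\le1$. If the AI statement does not give these directly, I would re-verify them. The last observation uses the odds-sum condition: stopping at the first $a_i$ is optimal exactly when $\sum r_i\le1$, which yields $\prod q_i$ as its win probability.

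Now translate $c'$ back to the original scale.
\begin{itemize}
\item If $c'$ lies in some interval $I_d$, set $c=d$ and $\rho=(c'-\phi(d))/\ell_d$. Then $\{Y_i>c'\}=\{X_i>c\}\cup\{X_i=c,\ \xi_i>\rho\}$. This is the same $\rho$ for every $i$, because the interval and the uniform scaling are common to all observations.
\item Otherwise $c'$ lies in the gaps, or in the image $\phi$ of a non-atom. Set $c=\sup\{x:\phi\text{-image of }x\le c'\}$ and take $\rho\in\{0,1\}$ accordingly. Since $c$ is not an atom of any $F_i$ in this case, $\rho$ is irrelevant.
\end{itemize}
This gives \eqref{eq:pi} and \eqref{eq:coupled}.

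\textbf{Main obstacle.} I expect the hardest step to be making Step 2 rigorous, specifically that auxiliary randomization cannot raise the optimal probability for $X$. I would handle it by conditioning on $(\xi_i)$: for each fixed realization, the rule is an ordinary stopping time for $X$ with win probability at most $V_n(X)$, and integrating over the $\xi$'s preserves the bound. A secondary concern is making sure the AI lemma's output indeed has the single-threshold form $\{Y_i>c'\}$ with a common $c'$, as the statement presupposes.
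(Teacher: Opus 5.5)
Your proposal is correct and follows essentially the paper's route: the same spreading $Y_i=g(X_i)+w(X_i)\xi_i$ with strictly separated intervals, the bound $V_n(Y)\le V_n(X)$ obtained by freezing the uniforms, the AI reduction applied to the $Y_i$, and translation of its critical level into a common pair $(c,\rho)$. Your ``gaps'' case is actually vacuous, since the points $\phi(x)$ with $x\notin D$, together with the intervals $I_d$, cover $\R$.

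The only real difference is the order of the final step. The paper takes $\prod q_i=U_1(x_1^*)=W_1(x_1^*)>0$ from AI. It then gets $\sum r_i\le1$ by comparing the optimal value with the rule that skips the first observation and stops at the first upper value, whose win probability is $\prod q_i\sum r_i$. You instead deduce positivity from the odds bound. That is fine provided $V_n(X')=\prod q_i$ is taken from AI's construction and the odds bound is then derived from it (for instance via the odds theorem), not the other way round as your last sentence suggests.
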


The probabilities in \eqref{eq:pi} depend on the given distributions. All values above $c$ are assigned to the upper value, all values below $c$ to the lower value, and a value equal to $c$ is assigned to the upper value with probability $1-\rho$. The same $c$ and $\rho$ are used for every observation. The proof does not use the sharp lower bound.

\section{The continuous case}
We retain the notation and definition of \citet{AI}.

The following results are invoked from the original paper, with the variables renamed. We record also the properties of its construction needed for the stronger statement in Theorem~\ref{thm:main}.
\begin{lemma}[Allaart--Islas reduction lemma and its construction, {\citep[Reduction lemma and its proof, Section 3]{AI}}]\label{lem:publishedreduction}
For independent continuously distributed observations $Y_1,\ldots,Y_n$, there is a critical level $x_1^*$ satisfying $U_1(x_1^*)=W_1(x_1^*)>0$, where
\[
U_k(x)=\prod_{j=k+1}^nG_j(x),\qquad
W_k(x)=\sup_{k<\tau\le n}\Pp(Y_\tau\ge\max\{x,Y_{k+1},\ldots,Y_n\}),
\]
Here $1\le k<n$, $G_j$ is the distribution function of $Y_j$, and the stopping times in $W_k$ are adapted to the future observations. The published construction sets $X_1'\equiv x_1^*$ and, for $i\ge2$, assigns the upper value $a_i$ on $\{Y_i>x_1^*\}$ and the lower value $b_i$ otherwise, with $a_i$ and $b_i$ chosen as in the proof of the original reduction lemma. In particular, the upper values satisfy the additional restrictions imposed in that proof; their ordering alone is not used to justify the replacements. Each successive replacement does not increase the optimal value. In particular,
\[
V_n(Y_1,\ldots,Y_n)\ge U_1(x_1^*)=V_n(X_1',\ldots,X_n').
\]
\end{lemma}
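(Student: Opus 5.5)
The plan is to prove everything directly from the functions $U_1$ and $W_1$. The existence of $x_1^*$ will come from an intermediate value argument. The inequality $V_n(Y_1,\ldots,Y_n)\ge U_1(x_1^*)$ will come from the one-step optimality equation. The equality $U_1(x_1^*)=V_n(X_1',\ldots,X_n')$ will come from the odds theorem applied to the two-valued sequence. I will not re-derive the individual one-at-a-time replacements of \citet{AI}. For them I only observe that they form a chain of inequalities from $V_n(Y_1,\ldots,Y_n)$ down to $V_n(X_1',\ldots,X_n')$, and the endpoints of that chain are exactly what is established below.

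\emph{Step 1 (critical level).} Since each $G_j$ is continuous, $U_1$ is continuous and nondecreasing, with limit $0$ at $-\infty$ and $1$ at $+\infty$. Next, $W_1$ is nonincreasing in $x$. It is also continuous: moving $x$ changes each event $\{Y_\tau\ge\max\{x,\ldots\}\}$ only on a set of probability at most $\sum_j\Pp(Y_j\in[x,x'])$, uniformly over $\tau$. As $x\to-\infty$, $W_1(x)\to V_{n-1}(Y_2,\ldots,Y_n)>0$, and as $x\to+\infty$, $W_1(x)\to0$. Hence $U_1-W_1$ is continuous, negative near $-\infty$ and positive near $+\infty$, so it has a zero $x_1^*$.

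Positivity also needs a check. If $U_1(x_1^*)=0$, then $W_1(x_1^*)=0$. But $W_1(x)=0$ forces $\Pp(Y_j\ge x)=0$ for all $j\ge2$, since the rule ``stop at the first $Y_j\ge x$'' wins with positive probability otherwise. That would give $U_1(x)=1$, a contradiction.

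\emph{Step 2 (the lower bound).} Conditioning on $Y_1=x$, the optimality equation gives $V_n(Y_1,\ldots,Y_n)=E\max\{U_1(Y_1),W_1(Y_1)\}$. For $x\ge x_1^*$ we have $U_1(x)\ge U_1(x_1^*)$. For $x\le x_1^*$ we have $W_1(x)\ge W_1(x_1^*)=U_1(x_1^*)$. So the integrand is at least $U_1(x_1^*)$ pointwise, and $V_n(Y_1,\ldots,Y_n)\ge U_1(x_1^*)$.

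\emph{Step 3 (value of the $V$-sequence).} Put $p_i=\Pp(Y_i>x_1^*)$ and $q_i=1-p_i$, so that $U_1(x_1^*)=\prod_{i\ge2}q_i$ by continuity. The rule ``continue past $Y_1$ and stop at the first $Y_j>x_1^*$'' wins at least when exactly one $Y_j$ exceeds $x_1^*$. That event has probability $\prod q_i\sum r_i$. Hence
\[
\textstyle\prod q_i\sum r_i\le W_1(x_1^*)=U_1(x_1^*)=\prod q_i,
\]
which gives $\sum_{i\ge2}r_i\le1$.

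Now take any ordering $a_1=x_1^*<a_2<\cdots<a_n$ and $x_1^*>b_2>\cdots>b_n$. For the $V$-sequence, stopping at time $1$ wins exactly when all later values are lower values, which has probability $\prod q_i$. A lower value $b_i$ can never be the overall maximum, since $b_i<a_1$. Stopping on an upper value $a_j$ wins iff no later upper value occurs. So any rule that continues at time $1$ is a rule for stopping on the last ``success'' in independent trials with success probabilities $p_i$. By the odds theorem together with $\sum r_i\le1$, its value is at most $\prod q_i\sum r_i\le\prod q_i$. Therefore $V_n(X_1',\ldots,X_n')=\prod q_i=U_1(x_1^*)$.

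The main obstacle is Step 1, specifically the continuity of $W_1$ and its limit at $-\infty$. I would handle continuity through the uniform-in-$\tau$ estimate above, and the limit by monotone convergence applied to each fixed rule.
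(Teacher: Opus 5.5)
Your proof is correct. Steps 1 and 2 are the standard first step of \citet{AI}, but you establish the identity $U_1(x_1^*)=V_n(X_1',\ldots,X_n')$ by a different route.

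The paper gets the upper bound $V_n(X_1',\ldots,X_n')\le\max\{U_1(x_1^*),W_1(x_1^*)\}=U_1(x_1^*)$ from the chain of successive replacements. That chain relies on the particular upper values $a_i$, chosen through the intermediate-value constants $c_i$. The lower bound comes from stopping at time $1$. Only later, in the proof of Theorem~\ref{thm:main}, is $\sum_{i\ge2}r_i\le1$ deduced from that identity.

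You reverse the logic. The ``exactly one exceedance'' rule turns the crossing equality $W_1(x_1^*)=U_1(x_1^*)$ directly into $\sum_{i\ge2}r_i\le1$. The odds theorem then caps every continuing rule for the $V$-sequence at $\prod q_i\sum r_i\le\prod q_i$.

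What this buys is a self-contained proof of $V_n(Y_1,\ldots,Y_n)\ge U_1(x_1^*)=V_n(X_1',\ldots,X_n')$ that holds for every admissible choice of $a_i,b_i$. It never needs the constants $c_i$, so it sidesteps the second use of continuity that the paper identifies. The costs are an appeal to Bruss's odds theorem and no information about the intermediate sequences. The clause that each successive replacement is non-increasing is therefore, for you as for the paper, simply cited from \citet{AI}. Knowing the endpoints does not verify the individual links, but the paper offers no more than that either.

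There is one slip in the positivity argument of Step 1. The rule ``stop at the first $Y_j\ge x$'' need not win with positive probability when some $\Pp(Y_j\ge x)>0$. For example, take $n=3$, $Y_2$ uniform on $(0,1)$, $Y_3$ uniform on $(2,3)$ and $x<0$: the rule always stops at $Y_2<Y_3$. Use fixed times instead. Since $\sum_{j\ge2}\Pp(Y_j\ge\max\{x,Y_2,\ldots,Y_n\})\ge\Pp(\max_{k\ge2}Y_k\ge x)$, the condition $W_1(x)=0$ forces $\Pp(Y_k\ge x)=0$ for all $k\ge2$. Hence $U_1(x)=1$, which is the contradiction you wanted.
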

For completeness, the equality follows from the proof of the reduction lemma. After replacing the first observation by $x_1^*$, the value is
\[
\max\{U_1(x_1^*),W_1(x_1^*)\}=U_1(x_1^*).
\]
Each later replacement does not increase this value. On the other hand, stopping at the first observation still wins with probability $U_1(x_1^*)$. Thus the final value equals $U_1(x_1^*)$.
\paragraph{Where continuity is used.}
The original proof first chooses $x_1^*$ with $U_1(x_1^*)=W_1(x_1^*)$, using continuity of $U_1$ and $W_1$. It then uses the intermediate value theorem to choose, for $2\le i\le n-1$, a constant $c_i>x_1^*$ such that
\[
\int_{x_1^*}^{\infty}U_i(x)\,dG_i(x)
=U_i(c_i)\,\Pp(Y_i>x_1^*).
\]
These constants are used to choose the upper replacement values $a_i$. For discrete observations, neither the crossing equality nor this choice of $c_i$ is guaranteed by the original argument. The continuous variables constructed in the next section allow both steps to be used exactly as in \citet{AI}.

\begin{lemma}[Bound for $V$-sequences, {\citep[bound for $V$-sequences, Section 3]{AI}}]\label{lem:publishedbound}
For every $V$-sequence $X_1',\ldots,X_n'$,
\[
V_n(X_1',\ldots,X_n')\ge\left(1-\frac1n\right)^{n-1},
\]
and the bound is attained.
\end{lemma}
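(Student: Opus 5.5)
The plan is to recast the problem for a $V$-sequence as a ``last success'' problem and then bound the value of explicit threshold rules. Let $I_1\equiv1$ and, for $i\ge2$, let $I_i=\mathbf 1\{X_i'=a_i\}$. These are independent, with $\Pp(I_i=1)=p_i$, and we write $q_i=1-p_i$ and $r_i=p_i/q_i$ as in Theorem~\ref{thm:main} (the case $p_i=1$ gives $r_i=\infty$ and is handled below). The ordering $a_1<\cdots<a_n$ and $a_1>b_2>\cdots>b_n$ gives two facts. An upper value $a_i$ exceeds every earlier observation. A lower value $b_i$ lies below $a_1$ and hence below the current maximum. Therefore $X_i'$ equals the overall maximum if and only if $I_i=1$ and $I_j=0$ for all $j>i$. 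It follows that $V_n(X_1',\ldots,X_n')$ is exactly the optimal probability of stopping on the last success of the independent indicators $I_1,\ldots,I_n$. This value is characterized by Bruss's odds theorem, but for the lower bound I only need the threshold rules. For $2\le s\le n$, the rule ``stop at the first success with index $\ge s$'' wins with probability $\prod_{j\ge s}q_j\sum_{j\ge s}r_j$. Stopping at once wins with probability $\prod_{j\ge2}q_j$.

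\emph{Case $R:=\sum_{j\ge2}r_j\le1$.} All $r_j$ are finite here. I will use the rule that stops at once, whose value is $\prod_{j\ge2}(1+r_j)^{-1}$. Since $\log(1+r)$ is concave, Jensen's inequality gives $\prod_{j\ge2}(1+r_j)\le(1+R/(n-1))^{n-1}\le(1+1/(n-1))^{n-1}$. Hence the value is at least $(1-1/n)^{n-1}$.

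\emph{Case $R>1$.} Let $s\ge2$ be the largest index with $R_s:=\sum_{j\ge s}r_j\ge1$, and set $m=n-s+1\le n-1$. Put $T=\sum_{j>s}r_j$; this satisfies $T<1$ and is finite. (If $p_s=1$, the threshold rule wins with probability at least $\prod_{j>s}q_j$, and the same estimates apply.) Two estimates follow.
\begin{itemize}
\item Jensen's inequality gives $\prod_{j>s}q_j\ge(1+T/(m-1))^{-(m-1)}$. This factor equals $1$ when $m=1$.
\item Since $R_s\ge1$, we get $q_sR_s=R_s/(1+R_s-T)\ge1/(2-T)$.
\end{itemize}
The threshold rule at $s$ therefore wins with probability at least
\[
g(T)=\frac{(1+T/(m-1))^{-(m-1)}}{2-T},\qquad T\in[0,1).
\]
Differentiating $\log g$ gives $-1/(1+T/(m-1))+1/(2-T)$. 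This is $\le0$ exactly when $T\le(m-1)/m$, so $g$ attains its minimum at $T=(m-1)/m$, where $g=(1+1/m)^{-m}=(1-1/(m+1))^{m}$. The sequence $(1-1/k)^{k-1}$ is decreasing in $k$ and $m+1\le n$, so this minimum is at least $(1-1/n)^{n-1}$. For $m=1$ the bound is simply $1/2$, which is consistent with the formula.

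\emph{Attainment.} Take $r_i=1/(n-1)$ for all $i\ge2$, that is, $p_i=1/n$. Bruss's odds theorem says that the optimal rule is a threshold rule, so the value is the maximum over $k=n-s+1$ of $k(1-1/n)^k/(n-1)$, together with $\prod_{j\ge2}q_j=(1-1/n)^{n-1}$. The ratio of consecutive terms, $\frac{k+1}{k}(1-\frac1n)$, is $\ge1$ for $k\le n-1$. Hence the maximum is attained at $k=n-1$ and equals $(1-1/n)^{n-1}$.

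The main obstacle is the case $R>1$. A crude estimate in $R_s$ alone degrades as $R_s\to\infty$, so it fails there. The fix is to use the maximality of $s$, which gives $T<1$, and to carry out the one-variable minimization in $T$ above.
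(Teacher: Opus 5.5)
Your proof is correct. It differs from the paper mainly in that the paper gives no argument for this lemma: it imports both the lower bound and its attainment from Allaart--Islas in published form, and Section 4 again quotes the value $Q$ of the extremal sequence from there. You instead make the lemma self-contained.

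The identification of the $V$-sequence problem with stopping on the last success of $I_1\equiv1,I_2,\ldots,I_n$ is exact, since the filtrations coincide and there are no ties. For the lower bound you need only that two explicit rules are admissible, not the optimality part of the odds theorem. Stopping at once handles $\sum_{j\ge2}r_j\le1$ by Jensen. The threshold rule at the largest $s$ with $R_s\ge1$ handles the rest: maximality of $s$ forces $T<1$, and minimising $g$ at $T=(m-1)/m$ gives $(1-1/(m+1))^m\ge(1-1/n)^{n-1}$. Your treatment of $p_s=1$ is also fine, because that rule then wins with probability $\prod_{j>s}q_j$ and $1/(2-T)<1$. The odds theorem enters only for attainment, which the paper also outsources.

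For the paper's own use in Corollary \ref{cor:bound}, only your first case is needed. Theorem \ref{thm:main} already supplies $V_n(X_1',\ldots,X_n')=\prod_{i\ge2}q_i$ together with \eqref{eq:odds}, so your Jensen (AM--GM) step alone gives the corollary. Your second case, with the one-variable minimisation, is what the lemma requires as stated for arbitrary $V$-sequences, and it is what citing the published bound hides.
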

We use these results in their published form. It remains to replace the atoms by continuous observations without changing strict comparisons or increasing the optimal win probability.

\section{Proof of the reduction lemma}
The observations $X_1,\ldots,X_n$ in this section have arbitrary independent distributions. Auxiliary uniform variables are denoted by $\xi_i$.

\begin{lemma}[Removing atoms]\label{lem:spread}
There are independent continuously distributed random variables $Y_1,\ldots,Y_n$, constructed from independent pairs $(X_i,\xi_i)$, such that
\begin{equation}\label{eq:order}
X_i<X_j\ \Longrightarrow\ Y_i<Y_j
\end{equation}
and $V_n(X_1,\ldots,X_n)\ge V_n(Y_1,\ldots,Y_n)$.
\end{lemma}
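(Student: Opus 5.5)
We will construct each $Y_i$ from $(X_i,\xi_i)$ by a single monotone transformation common to all $i$ that spreads atoms. Let $D$ be the countable set of points that are atoms of at least one of $F_1,\ldots,F_n$. Choose positive weights $w_d$, $d\in D$, with $\sum_{d\in D}w_d<\infty$. Define
\[
\phi(x)=x+\sum_{d\in D,\,d<x}w_d,
\]
and set
\[
Y_i=\phi(X_i)+w_{X_i}\,\xi_i\,\mathbf 1\{X_i\in D\}.
\]
Thus a value $x\notin D$ is mapped to the point $\phi(x)$. A value $d\in D$ is mapped to a uniform point on the interval $(\phi(d),\phi(d)+w_d)$. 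These intervals are pairwise disjoint and strictly ordered in $d$. Moreover, if $x<x'$ then $\phi(x)+w_x\mathbf 1\{x\in D\}\le\phi(x')$. This gives \eqref{eq:order}.

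The pairs $(X_i,\xi_i)$ are independent, so the $Y_i$ are independent. Each $Y_i$ is continuous for two reasons. First, the map $\phi$ is strictly increasing, so the non-atomic part of the law of $X_i$ pushes forward to a non-atomic law. Second, the atomic part becomes a mixture of uniform distributions. Every $Y_i$ is therefore continuously distributed.

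The main step is the inequality $V_n(X)\ge V_n(Y)$. Let $\tau$ be a stopping time for the filtration of $Y$. The map $y\mapsto$ ``$x$ with $y$ in the image block of $x$'' is measurable, so $X_i$ is a function of $Y_i$. Hence $\tau$ is a randomized stopping time for $X$: its decision at step $k$ depends on $X_1,\ldots,X_k$ and on the independent variables $\xi_1,\ldots,\xi_k$.

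On $\{Y_\tau=\max_j Y_j\}$ we have $X_\tau=\max_j X_j$. Indeed, if some $X_j>X_\tau$, then \eqref{eq:order} gives $Y_j>Y_\tau$, a contradiction. Hence
\[
\Pp(Y_\tau=\max_j Y_j)\le\Pp(X_\tau=M_n).
\]

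It remains to remove the randomization. Condition on $\xi=(\xi_1,\ldots,\xi_n)$. For each fixed $u\in(0,1)^n$, the rule $\tau_u$ obtained by substituting $\xi=u$ is an ordinary stopping time for the natural filtration of $X$; this is because $\xi$ is independent of $X$. By Fubini,
\[
\Pp(X_\tau=M_n)=\int\Pp(X_{\tau_u}=M_n)\,du\le V_n(X).
\]
Taking the supremum over $\tau$ gives the claim. For the measurability of $(x,u)\mapsto\mathbf 1\{\tau_u(x)=k\}$, we write $\tau$ as a measurable function of $(Y_1,\ldots,Y_n)$, which is a measurable function of $(X,\xi)$.

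I expect the main obstacle to be this step: stating cleanly that a stopping time for the $Y$-filtration becomes a mixture of $X$-stopping times. The key points are that $\{\tau\le k\}$ is measurable with respect to $\sigma(X_1,\xi_1,\ldots,X_k,\xi_k)$, and that freezing $\xi$ preserves adaptedness. The construction and the order property \eqref{eq:order} are routine by comparison.
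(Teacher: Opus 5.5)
Your proposal is correct and follows essentially the same route as the paper: the same atom-spreading map $Y_i=g(X_i)+w(X_i)\xi_i$ with summable weights, the inclusion $\{Y_\tau=\max_jY_j\}\subseteq\{X_\tau=M_n\}$ from order preservation, and freezing the uniforms at $u$ followed by averaging over $u$.

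Two justifications should be corrected, though the conclusions they support are true. First, $\tau$ is a randomized rule for $X$ because each $Y_i$ is a Borel function of $(X_i,\xi_i)$, not because $X_i$ is a function of $Y_i$. Second, $\tau_u$ is adapted because each decision is a Borel function of $(x_1,\ldots,x_k,u_1,\ldots,u_k)$; the independence of $\xi$ and $X$ is what justifies the Fubini identity, not adaptedness.
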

\begin{proof}
Let $A=\{a\in\R:\Pp(X_i=a)>0\text{ for some }i\}$. Each distribution has at most countably many atoms, so the finite union $A$ is at most countable. Assign positive weights $w_a$ with $\sum_{a\in A}w_a<\infty$; for example, use $w_{a_j}=2^{-j}$ after enumerating $A$. For an empty $A$, the sum is zero. Define
\[
w(x)=\begin{cases}w_x,&x\in A,\\0,&x\notin A,\end{cases}
\qquad g(x)=x+\sum_{\substack{a\in A\\a<x}}w_a.
\]
Let $\xi_1,\ldots,\xi_n$ be independent uniform random variables on $(0,1)$, independent of $X_1,\ldots,X_n$. Set
\begin{equation}\label{eq:spread}
Y_i=g(X_i)+w(X_i)\xi_i.
\end{equation}
For $x<z$,
\[
g(z)-g(x)-w(x)=z-x+\sum_{\substack{a\in A\\x<a<z}}w_a>0.
\]
Thus the intervals $[g(x),g(x)+w(x)]$ for distinct $x$ are strictly separated, proving \eqref{eq:order}.

The variables $Y_1,\ldots,Y_n$ are independent, since each depends only on the independent pair $(X_i,\xi_i)$. They also have continuous distributions. Indeed, fix $y$. By the preceding inequality, at most one value $x$ can produce $Y_i=y$. If $x\in A$, its replacement is uniform on an interval and therefore has probability zero of equalling $y$. Otherwise, $\Pp(X_i=x)=0$. Hence $\Pp(Y_i=y)=0$.

Now let $\tau$ be a stopping time for $Y_1,\ldots,Y_n$. At each time its decision is a function of the observations so far. Use the same functions after calculating $Y_i$ from $X_i$ and $\xi_i$, and stop at $n$ if no earlier decision is made. These are valid decision functions, since $g$ is increasing and $w$ is zero outside a countable set. Since strict comparisons are preserved,
\[
\{Y_\tau=\max_jY_j\}\subseteq\{X_\tau=M_n\}.
\]

Fix the auxiliary numbers at $u_1,\ldots,u_n$. The resulting rule, denoted by $\tau_u$, is a stopping time for the original observations: its decision at time $i$ uses only $X_1,\ldots,X_i$ and the fixed numbers $u_1,\ldots,u_i$. More explicitly, use the same decision functions as for $\tau$, substituting $g(X_i)+w(X_i)u_i$ for each observation, and stop at $n$ if the rule has not stopped earlier. Therefore,
\[
\Pp(X_{\tau_u}=M_n)\le V_n(X_1,\ldots,X_n).
\]
Averaging over the independent uniform numbers gives
\[
\Pp(Y_\tau=\max_jY_j)
\le\Pp(X_\tau=M_n)
\le V_n(X_1,\ldots,X_n).
\]
Taking the supremum over $\tau$ proves the lemma.
\end{proof}

\begin{proof}[Proof of Theorem~\ref{thm:main}]
Apply Lemma~\ref{lem:spread} and then directly invoke the construction of \citet[Section 3, reduction lemma]{AI}, recorded in Lemma~\ref{lem:publishedreduction}. With $x_1^*$ as its critical value and $a_1=x_1^*$, the resulting $V$-sequence satisfies
\[
V_n(X_1,\ldots,X_n)\ge V_n(Y_1,\ldots,Y_n)
\ge V_n(X_1',\ldots,X_n')=U_1(x_1^*)>0,
\]
and its upper-value events are $\{Y_i>x_1^*\}$.

To express those events in the original coordinates, define
\[
h(x)=g(x)+w(x)=x+\sum_{\substack{a\in A\\a\le x}}w_a,
\qquad c=\inf\{x:h(x)\ge x_1^*\}.
\]
Because $\sum_{a\in A}w_a$ is finite, the sum of the weights in $(c,c+\varepsilon]$ tends to zero as $\varepsilon\downarrow0$. Thus $h$ is right-continuous. Similarly, the sum of the weights in $[c-\varepsilon,c)$ tends to zero, so the left-hand limit of $h$ at $c$ is $g(c)$. Also $x\le h(x)\le x+\sum_{a\in A}w_a$, so $c$ is finite. By the definition of $c$,
\[
g(c)=h(c-)\le x_1^*\le h(c)=g(c)+w(c).
\]
Strict separation of the intervals in Lemma~\ref{lem:spread} gives uniqueness of $c$. If $w(c)>0$, put $\rho=(x_1^*-g(c))/w(c)$; otherwise put $\rho=1$. In the latter case, $x_1^*=g(c)=h(c)$, and observations equal to $c$ do not exceed the transformed threshold. With this convention, the following event identity holds exactly:
\[
\{Y_i>x_1^*\}=\{X_i>c\}\cup\bigl(\{X_i=c\}\cap\{\xi_i>\rho\}\bigr).
\]
This proves \eqref{eq:pi} and \eqref{eq:coupled}. Independence follows because these events depend on independent pairs $(X_i,\xi_i)$. Also $G_i(x_1^*)=q_i$, so
\[
U_1(x_1^*)=\prod_{i=2}^n q_i,
\]
which proves \eqref{eq:constructivevalue} and implies that each $q_i>0$.

Finally, consider the rule for the $V$-sequence that rejects its first observation, then stops at the first upper value, with forced stopping at $n$ if no upper value appears. It wins exactly when there is one upper value among observations $2,\ldots,n$. Its success probability is
\[
\sum_{i=2}^n p_i\prod_{\substack{2\le j\le n\\j\ne i}}q_j
=\left(\prod_{i=2}^nq_i\right)\sum_{i=2}^n r_i.
\]
This cannot exceed the optimal value $\prod_{i=2}^nq_i$. Division by that positive product proves \eqref{eq:odds}.
\end{proof}

\begin{remark}[What is retained from the original reduction]
For continuous observations, \eqref{eq:pi} reduces to $p_i=\Pp(X_i>c)$, as in \citet{AI}. With atoms, the extension assigns the same fraction $1-\rho$ of the boundary mass to the upper value for every $i$. The construction uses auxiliary uniforms, but the comparison with the original sequence concerns ordinary stopping times, as proved by freezing those uniforms in Lemma~\ref{lem:spread}.
\end{remark}

\begin{corollary}[Known sharp bound]\label{cor:bound}
For independent real-valued observations and $n\ge2$,
\begin{equation}\label{eq:bound}
V_n(X_1,\ldots,X_n)\ge\left(1-\frac1n\right)^{n-1}.
\end{equation}
The bound is attained in both the discrete and continuous classes. For $n=1$, the value is $1$.
\end{corollary}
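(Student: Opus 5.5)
The lower bound follows by chaining Theorem~\ref{thm:main} with Lemma~\ref{lem:publishedbound}. For $n\ge2$ and arbitrary independent $X_1,\ldots,X_n$, Theorem~\ref{thm:main} gives a $V$-sequence $X_1',\ldots,X_n'$ with $V_n(X_1',\ldots,X_n')\le V_n(X_1,\ldots,X_n)$. Lemma~\ref{lem:publishedbound} applied to that $V$-sequence gives $V_n(X_1',\ldots,X_n')\ge(1-1/n)^{n-1}$, and \eqref{eq:bound} follows at once.

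There is also a self-contained route that avoids quoting the $V$-sequence bound. By \eqref{eq:constructivevalue}, $V_n(X_1,\ldots,X_n)\ge\prod_{i=2}^n q_i$. Writing $q_i=1/(1+r_i)$, \eqref{eq:odds} gives $\sum_{i=2}^n r_i\le1$. By AM--GM,
\[
\prod_{i=2}^n(1+r_i)\le\left(1+\tfrac{1}{n-1}\right)^{n-1}.
\]
Hence $\prod_{i=2}^n q_i\ge(1-1/n)^{n-1}$. I would record this as a check, but cite the lemma as the primary argument.

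For attainment in the discrete class, take the extremal $V$-sequence from Lemma~\ref{lem:publishedbound}; it is itself a discrete independent sequence. The natural candidate has $r_i=1/(n-1)$, that is $p_i=1/n$ for $i\ge2$. By \eqref{eq:constructivevalue}, such a sequence has value $\prod q_i=(1-1/n)^{n-1}$, provided it is a valid $V$-sequence. Since the lemma states the bound is attained, I would simply cite it.

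For the continuous class, I would spread each value of an extremal $V$-sequence into a tiny interval. The intervals must preserve the ordering $b_n<\cdots<b_2<a_1<a_2<\cdots<a_n$ and must not overlap. Lemma~\ref{lem:spread} then gives that the spread sequence has value at most that of the discrete one. The reverse inequality is automatic from the lower bound just proved, since the spread sequence is itself a sequence of independent observations. Hence equality holds.

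For $n=1$, stopping at the only observation always wins, so the value is $1$.

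The only point needing care is the continuous attainment. The task is to check that the spread sequence satisfies the hypotheses of Lemma~\ref{lem:spread}, namely that it is built from the discrete pairs $(X_i,\xi_i)$, and this is immediate from its construction.
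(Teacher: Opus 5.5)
Your proposal is correct. For the inequality, your primary argument (Theorem~\ref{thm:main} followed by Lemma~\ref{lem:publishedbound}) is the paper's, and so is your discrete attainment (citing the extremal $V$-sequence of \citet{AI}). You depart from the paper in two places.

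First, your AM--GM check derives \eqref{eq:bound} from \eqref{eq:constructivevalue} and \eqref{eq:odds} alone. The inequality then no longer rests on the $V$-sequence bound of \citet{AI}, and the paper notes that the proof of Theorem~\ref{thm:main} does not use that bound. Attainment still needs the value of the extremal sequence. The paper's one-line citation is shorter, but it imports both halves from \citet{AI}.

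Second, for continuous attainment the paper takes the explicit densities of \citet{AI} and checks both inequalities directly. Freezing the positions inside the intervals bounds the value by $Q=(1-1/n)^{n-1}$. Following the $V$-sequence rule, by reading off which interval contains each $X_i$, achieves $Q$. Your upper bound is the same freezing argument, packaged as Lemma~\ref{lem:spread} applied with the extremal $V$-sequence as input. That lemma has no further hypotheses to verify. If you choose your own intervals, you are using its proof, which goes through verbatim for any measurable order-preserving spreading. Your lower bound comes free from \eqref{eq:bound}, since the spread sequence is independent. This is shorter and shows that every order-preserving spreading of the extremal sequence is extremal. The price is that the example depends on the full reduction theorem, whereas the paper's verification is self-contained.

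One remark in your discrete step is mistaken, though your final argument does not rely on it. \eqref{eq:constructivevalue} describes the particular $V$-sequence produced by the reduction, not an arbitrary $V$-sequence. For instance, with $n=2$ and $p_2=0.9$, a $V$-sequence has value $\max\{q_2,p_2\}=0.9\ne q_2$. For $p_i=1/n$ the identity $V_n=\prod_{i\ge2}q_i$ does hold, but that is the computation in \citet{AI}. Alternatively, the maximum of a $V$-sequence is its last upper value (counting time $1$ as a sure success). Bruss's odds theorem therefore applies and gives the value $\prod_{i\ge2}q_i\sum_{i\ge2}r_i=Q$ when $\sum_{i\ge2}r_i=1$.
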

\begin{proof}
Apply Theorem~\ref{thm:main} and the bound for $V$-sequences in \citet[Section 3]{AI}, stated in Lemma~\ref{lem:publishedbound}. The examples below show that the bound is attained.
\end{proof}

\section{Attainment and the role of ties}
The extremal $V$-sequence in \citet[Section 3]{AI} has $X_1'\equiv0$, $a_i=i$, $b_i=-i$, and independent probabilities of the upper values $p_i=1/n$ for $i=2,\ldots,n$. Its optimal value, as established there, is
\[
V_n(X_1',\ldots,X_n')=\left(\frac{n-1}{n}\right)^{n-1}=:Q.
\]
This proves discrete attainment in the larger class considered here.

For continuous attainment, use the example of \citet{AI}: let $X_1$ be uniform on $(0,1)$ and, for $i=2,\ldots,n$, give $X_i$ density
\[
f_i(x)=\begin{cases}
1/n,&i<x<i+1,\\
(n-1)/n,&-i<x<-i+1,\\
0,&\text{otherwise}.
\end{cases}
\]
These intervals have the same ordering as the values of the preceding $V$-sequence. The position within an interval changes neither which observation is largest nor the probabilities of future upper and lower values. More explicitly, generate these positions independently of the upper and lower choices. Fixing the positions turns any stopping rule into a rule for the $V$-sequence. Averaging shows that its win probability cannot exceed $Q$. Conversely, a rule for the $V$-sequence can be followed by observing which interval contains each $X_i$. Hence the optimal win probability is $Q$.

\begin{remark}
The convention that a tie for the maximum is a win is essential. If success requires selecting a unique strict maximum, a sequence of identical constants has success probability zero. The present reduction concerns exactly the objective $\Pp(X_\tau=M_n)$.
\end{remark}

\end{document}